\documentclass[amsart]{article}
\usepackage[left=2.5cm, right=2.5cm, top=3cm, bottom=3cm]{geometry} 
\usepackage[utf8]{inputenc}

\usepackage[english]{babel}
\usepackage{amssymb}
\usepackage{graphicx}
\usepackage[tbtags]{amsmath}
\usepackage{amsfonts}
\usepackage{mathrsfs}
\usepackage{xcolor}
\usepackage{commath}
\usepackage{amsthm}
\usepackage{lineno}

\newtheorem{theorem}{\quad Theorem}[section]

\newtheorem{definition}[theorem]{\quad Definition}

\newtheorem{lemma}[theorem]{\quad Lemma}

\newtheorem{proposition}[theorem]{\quad Proposition}

\newtheorem{remark}[theorem]{\quad Remark}

\newtheorem{thm}{Theorem}[section]

\newtheorem{deff}[thm]{Definition}

\title{On the first eigenvalue of a nonlinear Schr\"{o}dinger type equation}

\author{
Ardra A \footnote{The author was supported by the Department of Science and Technology INSPIRE Fellowship.}
}

\date{}
\begin{document}
	\maketitle
	
	\begin{abstract}
		We consider an eigenvalue problem for the generalized nonlinear Schr\"{o}dinger type operator with the Robin boundary condition as given below. 
 \begin{equation*}
 	\label{ab-Robin p-Laplace evp with potential term_intro}
 	\left\{
 	\begin{split}
 	-\Delta_p u+V(x)|u|^{p-2}u&=\lambda  |u|^{p-2}u\quad &&\mathrm{in} ~\Omega,\\
 	|\nabla u|^{p-2}\frac{\partial u}{\partial\eta}+\beta|u|^{p-2}u&=0\quad &&\mathrm{on}~\partial\Omega,
 	\end{split}
 	\right.
 	\end{equation*}
 	where $\Delta_p u := \operatorname{div}(|\nabla u|^{p-2}\nabla u)$ is the $p$-Laplace operator, $\Omega $ is a bounded domain in $\mathbb{R}^n$ with smooth boundary, $V \in C^1(\mathbb{R}^n),$ $ \eta $ denotes the outward unit normal, and $ \beta $ is a positive real constant. 
We study the properties of its first eigenvalue with respect to the potential $V$, the boundary parameter $\beta$ as well as the domain. 
    First, we establish some properties of the smallest eigenvalue $\lambda_1(V)$ with respect to the potential. We then prove the differentiability of $\lambda_1(V)$ with respect to the Robin boundary parameter $\beta$ and give an explicit formula for this derivative, which is then used to investigate some monotonicity properties of $\lambda_1(V).$  We also obtain a shape derivative formula for the smallest eigenvalue. Using these derivatives, we also study domain monotonicity properties of the first eigenvalue.  
	\end{abstract}
 \noindent
 {\bf Mathematics Subject Classification (2020):} {  35Q55, 47G40, 35J92, 35P30, 35P99, 49R05, 35P15,  47J10}.\\
 {\bf Keywords:}
 $p$-Laplacian, Schr\"{o}dinger type equation, eigenvalue problem, Robin boundary condition, shape derivative, Hadamard’s perturbations, domain monotonicity,  nonlinear eigenvalues,  elliptic equations.
 
	\section{Introduction}

We consider a nonlinear Schr\"{o}dinger type equation for the $p$-Laplacian as stated below:
 \begin{equation}
	\label{Robin p-Laplace evp with potential term}
	\left\{
	\begin{split}
	-\Delta_p u+V(x)|u|^{p-2}u&=\lambda  |u|^{p-2}u\quad \mathrm{in} ~\Omega,\\
	|\nabla u|^{p-2}\frac{\partial u}{\partial\eta}+\beta|u|^{p-2}u&=0\quad\mathrm{on}~\partial\Omega,
	\end{split}
	\right.
	\end{equation}
	where $\Omega $ is a bounded domain in $\mathbb{R}^n$ with smooth boundary, $V \in C^1(\mathbb{R}^n),$  $ \eta $ denotes the outward unit normal, and $ \beta $ is a positive real constant. 

\begin{definition} 
		Let $(u,\lambda)\in W^{1,p}(\Omega) \times \mathbb{R}$ such that $$ \int_{\Omega} |\nabla u |^{p-2}\nabla u \cdot \nabla \phi ~dx +\int_{\Omega} V(x)|u|^{p-2}u \phi ~dx + \int_{\partial \Omega} \beta |u|^{p-2}u\phi ~ds = \lambda\int_{\Omega} |u|^{p-2}u\phi ~dx$$ for all $\phi\in W^{1,p}(\Omega).$ Then $(u,\lambda)$ is a weak solution of \eqref{Robin p-Laplace evp with potential term}. 
\end{definition}

Fragnelli, Mugnai, and Papageorgiou (see Proposition 2.4, \cite{fragnelli2016brezis}) have shown that  there exists a smallest eigenvalue $\lambda_1$ for \eqref{Robin p-Laplace evp with potential term} when $V \in L^\infty (\Omega).$ It is characterized as  
\begin{equation*}
    \lambda_1(V)=\displaystyle{\inf_{ u \in W^{1,p}(\Omega) }} \left\{\int_{\Omega} |\nabla u |^{p} ~dx + \int_{\Omega} V(x)|u|^{p} ~dx + \int_{\partial \Omega} \beta |u|^{p} ~ds  : \left \| u \right \|_p=1 \right\}.
\end{equation*}
They also prove that it is simple, isolated, and the corresponding eigenfunction is in $C^{1,\alpha}(\overline\Omega)$ for some $\alpha \in (0,1)$ (see Proposition 2.4, \cite{fragnelli2016brezis}). It is also known that other eigenfunctions change sign (see Proposition 2.4, \cite{fragnelli2016brezis}).  

    
We are interested in investigating the behavior of its first eigenvalue. Eigenvalue problems for the Schr\"{o}dinger operator, which is an elliptic differential operator of the form $-\Delta + V(x),$ where $V(x)$ is a potential,  are among fundamental problems in mathematical physics (\cite{dunford1964linear}, \cite{dunford1988linear} and \cite{dunford1971linear}). \textcolor{black}{In quantum mechanics, the energy levels of a quantum particle in the potential energy $V$ are represented by the eigenvalues of the Schrödinger operator. In particular, $\lambda_1(V)$ is known as the ground state (see chapter 8 of \cite{henrot2006extremum}).} Minimization of eigenvalues arises in quantum scattering theory and the study of bulk matter. Extremum problems for the eigenvalues of such operators have been of much interest in the past years. See Chapter 8 in \cite{henrot2006extremum},  \cite{ashbaugh1987maximal} and \cite{bonder2006optimization}.

In this paper, first we study properties of the first eigenvalue of the nonlinear Schr\"{o}dinger problem \eqref{Robin p-Laplace evp with potential term} with respect to various parameters. We establish some properties of the first eigenvalue in terms of the
potential. We also study the differentiabilty of the first eigenvalue with respect to the Robin boundary
parameter. We also obtain a shape derivative formula for the first eigenvalue. We use this to deduce domain monotonicity properties for its first eigenvalue.

We show that the first eigenvalue satisfies the properties as given in the theorem below.
\begin{theorem}\label{properties Schrodinger p-Laplace eval}
$ \lambda_1(V)$ satisfies the following properties.
\begin{itemize}
     \item[(i)]  $ \lambda_1(V)$ is increasing with respect to $V.$
    \item[(ii)]  $ \lambda_1(V)$ is continuous with respect to $V$.
 \item[(iii)] If  $ \lambda_1(V) >0$, we can find a constant  $M>0$ such that
\begin{equation}
\label{property 3}
 M\|u\|^p_{W^{1,p}(\Omega)}\leq \int_{\Omega}|\nabla u|^p~dx+\int_{\Omega}V|u|^p~dx+\int_{\partial\Omega}\beta|u|^p~ds\quad \forall u\in W^{1,p}(\Omega).
\end{equation}
\end{itemize}
\end{theorem}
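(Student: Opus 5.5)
All three parts will be proved directly from the variational characterization of $\lambda_1(V)$ recalled above. Write
\[
Q_V(u)=\int_{\Omega}|\nabla u|^p\,dx+\int_{\Omega}V|u|^p\,dx+\int_{\partial\Omega}\beta|u|^p\,ds .
\]
Then $\lambda_1(V)=\inf\{Q_V(u): \|u\|_p=1\}$, and the infimum is attained by the positive first eigenfunction.

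\textbf{Part (i).} Suppose $V\le W$ a.e. Then $Q_V(u)\le Q_W(u)$ for every $u$, and taking infima over $\|u\|_p=1$ gives $\lambda_1(V)\le\lambda_1(W)$. For strict monotonicity when $V\le W$ with $V\ne W$ on a set of positive measure, let $u_W$ be the minimizer for $W$. It lies in $C^{1,\alpha}(\overline\Omega)$ and is positive in $\Omega$, by the strong maximum principle (Vázquez) combined with the Proposition 2.4 of \cite{fragnelli2016brezis} cited above. Hence
\[
\lambda_1(V)\le Q_V(u_W)=\lambda_1(W)-\int_{\Omega}(W-V)u_W^p\,dx<\lambda_1(W).
\]

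\textbf{Part (ii).} Continuity, in the $L^\infty$ norm, follows from the same comparison. For any $u$ with $\|u\|_p=1$,
\[
|Q_V(u)-Q_W(u)|\le\int_{\Omega}|V-W|\,|u|^p\,dx\le\|V-W\|_\infty .
\]
Taking infima on both sides yields $|\lambda_1(V)-\lambda_1(W)|\le\|V-W\|_\infty$, so $\lambda_1$ is Lipschitz.

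\textbf{Part (iii).} This is the main step. I would argue by contradiction. Suppose no such $M$ exists. Then there are $u_k$ with $\|u_k\|_{W^{1,p}(\Omega)}=1$ and $Q_V(u_k)<\frac1k$. Since $V\in L^\infty(\Omega)$ and $\beta>0$,
\[
\|\nabla u_k\|_p^p\le Q_V(u_k)+\|V\|_\infty\|u_k\|_p^p,
\]
so $(u_k)$ is bounded in $W^{1,p}(\Omega)$. By reflexivity, Rellich--Kondrachov and compactness of the trace, we may pass to a subsequence with $u_k\rightharpoonup u$ weakly in $W^{1,p}(\Omega)$, $u_k\to u$ in $L^p(\Omega)$ and $u_k\to u$ in $L^p(\partial\Omega)$. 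By weak lower semicontinuity of $\|\nabla\cdot\|_p^p$ and the strong convergence of the other two terms, $Q_V(u)\le\liminf_k Q_V(u_k)\le 0$.

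Two cases remain. If $u\ne0$, then $\lambda_1(V)\le Q_V(u)/\|u\|_p^p\le0$, contradicting $\lambda_1(V)>0$. If $u=0$, then $\|u_k\|_p\to0$ and $\int_{\partial\Omega}|u_k|^p\,ds\to0$, so $\|\nabla u_k\|_p^p\to1$ from the normalization. On the other hand, the $V$ term and the boundary term of $Q_V(u_k)$ tend to $0$ while $Q_V(u_k)\to 0$, which forces $\|\nabla u_k\|_p\to0$. This is again a contradiction.

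An alternative, more quantitative route to (iii) is to apply (ii) to $V-\varepsilon$, obtaining $\lambda_1(V-\varepsilon)>0$ for small $\varepsilon$. Then take the convex combination
\[
Q_V=tQ_V+(1-t)Q_V\ge t\bigl(\|\nabla u\|_p^p-\|V\|_\infty\|u\|_p^p\bigr)+(1-t)\lambda_1(V)\|u\|_p^p
\]
and choose $t$ small. Either way, the delicate point is controlling the gradient term using only $\lambda_1(V)>0$, which the compactness argument handles.
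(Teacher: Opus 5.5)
Your argument is correct throughout. For (i) and (ii) you follow the paper, which only says both ``follow from the characterization'' of $\lambda_1(V)$. You make this precise and add two things the paper does not state: strict monotonicity, via positivity of $u_W$, and the Lipschitz bound $|\lambda_1(V)-\lambda_1(W)|\le\|V-W\|_\infty$.

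For (iii) your primary route differs from the paper's. The paper argues directly: it fixes $0<C<\lambda_1(V)/(\lambda_1(V)+\|V\|_\infty)$ and shows
\[
Q_V(u)-C\|\nabla u\|_p^p=(1-C)Q_V(u)+C\int_\Omega V|u|^p\,dx+C\beta\int_{\partial\Omega}|u|^p\,ds\ge\bigl((1-C)\lambda_1(V)-C\|V\|_\infty\bigr)\|u\|_p^p\ge0.
\]
It then adds $\lambda_1(V)\|u\|_p^p\le Q_V(u)$ to obtain $M=\tfrac12\min\{C,\lambda_1(V)\}$. This is exactly your ``alternative'' convex combination with $t=C$. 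The preliminary step through $\lambda_1(V-\varepsilon)$ plays no role there, since positivity of $\lambda_1(V-\varepsilon)$ only says $Q_V(u)\ge\varepsilon\|u\|_p^p$.

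Your contradiction argument is also valid. In the case $u=0$, the claim $Q_V(u_k)\to0$ needs $0\le Q_V(u_k)<1/k$, which holds because $Q_V\ge0$ when $\lambda_1(V)>0$; alternatively, $\limsup_k Q_V(u_k)\le0$ is enough. However, it produces a non-explicit $M$ and requires Rellich--Kondrachov and compactness of the trace. The paper's argument uses only the infimum definition of $\lambda_1(V)$, $\beta>0$ and $V\in L^\infty$, and gives $M$ explicitly in terms of $\lambda_1(V)$ and $\|V\|_\infty$. So the control of the gradient term that you call the delicate point is in fact elementary and needs no compactness.
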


\textcolor{black}{We are also interested in studying the behavior of $\lambda_1(V)$ in terms of $\beta.$ In \cite{antunes2013asymptotic} and \cite{filinovskiy2014eigenvalues}, the authors have found the derivative of the first eigenvalue of the Robin Laplace eigenvalue problem with respect to the Robin boundary parameter. Their proofs rely upon the fact that the Laplace operator is linear and self-adjoint. For a fixed potential $V \in C^1(\mathbb{R}^n),$  we show that $\lambda_1(V)$ is differentiable with respect to the Robin boundary parameter $\beta$ and  obtain the formula for the derivative as   
   $$ \frac{d \lambda_1}{d\beta} = \frac{\int_{\partial \Omega} |u_{\beta}|^{p} ~ds }{ \int_{\Omega} |u_{\beta}|^{p} ~dx}, $$
where $u_{\beta}$ is the eigenfunction corresponding to $\lambda_1(V)$ normalized by $\|u_\beta\|_{\infty} = 1.$ A part of this proof uses some techniques from \cite{filinovskiy2014eigenvalues}, but since our operator is not linear or self-adjoint, we use a regularity result and Arzel\`{a} Ascoli theorem to prove it. Our proof is an alternate proof for the Robin Laplace eigenvalue problem as well. If $V=0,$ it also gives a formula for the derivative of the first Robin eigenvalue of the $p$-Laplacian with respect to $\beta.$ This also implies that the first eigenvalues are strictly increasing with respect to $\beta.$}

Understanding how energy functionals of differential equations involving the $p$-Laplacian depends on the domain is a fascinating area of study. Domain derivatives, also known as shape derivatives are used in such studies. Shape derivatives are obtained using the standard technique of Hadamard’s method of variations.  Shape derivative formula for the first Robin eigenvalue of the $p$-Laplacian was obtained in \cite{mallick2022shape} using this technique. \textcolor{black}{Using similar techniques, we obtain a shape derivative formula for the first eigenvalue of \eqref{Robin p-Laplace evp with potential term} as stated below. }
\begin{theorem} \label{Shape derivative formula-Schrodinger}
    Let $\Omega \subseteq \mathbb{R}^n$ be a bounded domain in the H$\ddot{\text{o}}$lder class $C^{2,\alpha},$  $ \Omega_t :=\{y=x+tv(x)+o(t):x\in \Omega, |t| \text{ sufficiently small}\}$ where $v$ is a $C^{2,\alpha}$ vector field,  and $H$ denote the mean curvature of $\partial \Omega .$ Then for the eigenvalue $\lambda_1(t):=\lambda_1(\Omega_t)$ of the Robin $p$-Laplace eigenvalue problem with potential $V \in C^1(\mathbb{R}^n),$ we have
   \begin{equation}
        \dot{\lambda}_1(0)  =\int_{\partial \Omega} \{\left[|\nabla u|^p + V|u|^p - \lambda_1 (0) |u|^p + \beta |u|^p (n-1)H + p \beta u \left( |u|^{p-2} \nabla u \cdot \eta \right) \right] (v.\eta)  \} ds.
   \end{equation}
\end{theorem}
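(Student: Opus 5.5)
We use Hadamard's method, transporting the problem on $\Omega_t$ back to the fixed domain $\Omega$, as in \cite{mallick2022shape}. Write $T_t(x)=x+tv(x)+o(t)$ and let $u_t$ be the positive first eigenfunction on $\Omega_t$, normalized by $\|u_t\|_{L^p(\Omega_t)}=1$. By the variational characterization,
\[
\lambda_1(t)=\int_{\Omega_t}|\nabla u_t|^p\,dy+\int_{\Omega_t}V|u_t|^p\,dy+\beta\int_{\partial\Omega_t}|u_t|^p\,ds_t .
\]
Pulling back by $T_t$ gives functions $w_t=u_t\circ T_t\in W^{1,p}(\Omega)$. The transformed Rayleigh quotient involves $DT_t^{-T}\nabla w_t$, the Jacobian $J_t=\det DT_t = 1+t\,\mathrm{div}\,v+o(t)$, the tangential Jacobian $J_t^{\partial}=1+t\,\mathrm{div}_{\partial\Omega}v+o(t)$ on the boundary, and $V\circ T_t$.

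\textbf{Step 1: convergence of $w_t$.} We show that $w_t\to u$ in $C^{1}(\overline\Omega)$ as $t\to0$, where $u=u_0$. The $w_t$ solve uniformly elliptic quasilinear problems with coefficients that are smooth in $t$, so Lieberman-type $C^{1,\alpha}$ estimates are uniform in $t$. Arzel\`a--Ascoli then gives subsequential limits. Any limit is a nonnegative normalized first eigenfunction on $\Omega$, and simplicity (Proposition 2.4 of \cite{fragnelli2016brezis}) identifies it as $u$.

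\textbf{Step 2: two-sided bounds on $\lambda_1(t)$.} Using $u\circ T_t^{-1}$ as a test function on $\Omega_t$ gives an upper bound. Using $w_t$ as a test function on $\Omega$ gives a lower bound. In both, the $t$-derivative at fixed function is the same expression, namely
\[
\int_\Omega\Big(|\nabla u|^p\mathrm{div}\,v-p|\nabla u|^{p-2}\nabla u\cdot Dv\,\nabla u+\nabla V\cdot v\,|u|^p+(V-\lambda_1)|u|^p\mathrm{div}\,v\Big)dx+\beta\int_{\partial\Omega}|u|^p\mathrm{div}_{\partial\Omega}v\,ds .
\]
The $-\lambda_1|u|^p\,\mathrm{div}\,v$ term comes from the normalization. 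Step 1 lets us replace $w_t$ by $u$ in the lower bound up to $o(1)$ coefficients. This yields differentiability at $0$ with $\dot\lambda_1(0)$ equal to this integral.

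\textbf{Step 3: conversion to a boundary integral.} This is the main computational obstacle. We use the identity
\[
\mathrm{div}\big(|\nabla u|^p v-p(v\cdot\nabla u)|\nabla u|^{p-2}\nabla u\big)=|\nabla u|^p\mathrm{div}\,v-p|\nabla u|^{p-2}\nabla u\cdot Dv\nabla u-p(v\cdot\nabla u)\Delta_pu ,
\]
which requires a justification, since $u$ is only $C^{1,\alpha}$ and $|\nabla u|^{p-2}\nabla u\in W^{1,2}_{loc}$; we handle this by regularization or by using the weak form directly. We substitute $\Delta_pu=(V-\lambda_1)|u|^{p-2}u$. Then $(V-\lambda_1)\big(|u|^p\mathrm{div}\,v+p|u|^{p-2}u\,v\cdot\nabla u\big)+\nabla V\cdot v|u|^p=\mathrm{div}\big((V-\lambda_1)|u|^pv\big)$, so all volume terms become divergences. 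The divergence theorem leaves
\[
\int_{\partial\Omega}\Big[(|\nabla u|^p+(V-\lambda_1)|u|^p)(v\cdot\eta)-p(v\cdot\nabla u)|\nabla u|^{p-2}\partial_\eta u\Big]ds .
\]

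\textbf{Step 4: boundary terms.} Since the formula involves only $v\cdot\eta$, by the standard structure theorem we may reduce to $v$ normal on $\partial\Omega$, that is $v=(v\cdot\eta)\eta$ there. Then $\mathrm{div}_{\partial\Omega}v=(n-1)H(v\cdot\eta)$, which produces the curvature term $\beta|u|^p(n-1)H(v\cdot\eta)$. The Robin condition $|\nabla u|^{p-2}\partial_\eta u=-\beta|u|^{p-2}u$ turns $-p(v\cdot\nabla u)|\nabla u|^{p-2}\partial_\eta u$ into $p\beta u|u|^{p-2}\,\partial_\eta u\,(v\cdot\eta)$, which is the last term $p\beta u(|u|^{p-2}\nabla u\cdot\eta)(v\cdot\eta)$. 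Collecting the terms gives the stated formula. The delicate points are the regularity justification in Step 3 and the uniform estimates in Step 1.
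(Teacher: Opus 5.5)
Your proposal is correct. Its skeleton matches the paper: pull back to $\Omega$, obtain the volume expression $\dot\Lambda_1+\dots+\dot\Lambda_5$, convert it to boundary integrals using the equation, then apply the Robin condition. The genuine differences are in how differentiability is obtained and how the tangential part of $v$ is removed.

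\textbf{Differentiability.} The paper formally differentiates the transformed Rayleigh quotient. It assumes an expansion $\tilde u(t)=u+t\dot{\tilde u}(0)+o(t)$ and removes the $\dot{\tilde u}$-terms by testing the weak form with $\dot{\tilde u}$ and differentiating the normalization $\int_\Omega|\tilde u|^pJ\,dx=1$. This presupposes that $t\mapsto\tilde u(t)$ is differentiable, which does not follow from the $C^1$ convergence the paper cites, and it presupposes that $\dot\lambda_1(0)$ exists.

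Your two-sided bound, with $R_t$ the pulled-back quotient,
$R_t(w_t)-R_0(w_t)\le\lambda_1(t)-\lambda_1(0)\le R_t(u)-R_0(u)$ for $t>0$, proves that $\dot\lambda_1(0)$ exists. It needs only $w_t\to u$, and in fact convergence in $W^{1,p}(\Omega)$ suffices. The reason is that the first-order remainder is $o(t)\|w\|^p_{W^{1,p}(\Omega)}$ uniformly in $w$. That convergence follows variationally: the upper bound, compactness and simplicity force every weak limit to be $u$, and convergence of the energies then upgrades this to strong convergence. So the uniform Lieberman estimates of your Step 1 are more than you need.

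\textbf{Integration by parts.} Your Step 3 is the paper's computation. The paper justifies the integration by parts through the regularized operator $-\mathrm{div}((|\nabla u_\varepsilon|^2+\varepsilon^2)^{(p-2)/2}\nabla u_\varepsilon)$, which is the regularization you allude to.

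\textbf{Tangential part of $v$.} In Step 4 the paper does not reduce to normal fields. It splits $v=v^\tau+(v\cdot\eta)\eta$ and uses the Gauss theorem on surfaces,
$\beta\int_{\partial\Omega}|u|^p\,\mathrm{div}_{\partial\Omega}v^\tau\,ds=-p\beta\int_{\partial\Omega}|u|^{p-2}u\,v^\tau\cdot\nabla^\tau u\,ds$.
This cancels exactly the tangential part of $p\beta|u|^{p-2}u\,(v\cdot\nabla u)$. That two-line computation is preferable to invoking the structure theorem, because your reduction tacitly requires two further facts:
\begin{itemize}
\item[(a)] fields tangent to $\partial\Omega$ give zero derivative, which you would get by applying your Step 2 to their flows, since these map $\Omega$ onto itself;
\item[(b)] Steps 1--3 hold for the replacement field $(v\cdot\eta)\eta$, which is only $C^{1,\alpha}$ when $\partial\Omega\in C^{2,\alpha}$, below the $C^{2,\alpha}$ regularity assumed for $v$.
\end{itemize}
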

\noindent The presence of the potential term makes the computation challenging.

The first Dirichlet eigenvalue $\lambda_1^D$ for the $p$-Laplacian exhibits domain monotonicity under domain inclusions, that is, $\lambda_1^D(\Omega_1)\leq \lambda_1^D(\Omega_2) $ for smooth bounded domains $\Omega_2 \subseteq \Omega_1$. Robin eigenvalues do not have this property in general. A  counterexample can be found in \cite{giorgi2005monotonicity}. Domain monotonicity for the first  Robin eigenvalue of the $p$-Laplacian for particular cases were established in \cite{mallick2022shape} and \cite{gavitone2018first}. We use the shape derivative formula obtained above to study the domain monotonicity property of the first eigenvalue of \eqref{Robin p-Laplace evp with potential term}.
If $V(x)\leq \lambda_1 \; a.e. $ on $\partial \Omega,$ we show that domain monotonicity holds for the ball case for all values of the boundary parameter $\beta$ and also for more general smooth bounded domains in the case of a large boundary parameter $\beta$. As an application of the differentiability of the first eigenvalue with respect to $\beta$ as discussed above, we show that it satisfies the domain monotonicity property when the domain is scaled.  

The paper is organized as follows. In the next section we prove properties of the first eigenvalue. In section 3, we study the differentiability with respect to the boundary parameter $\beta$. In the last section, we obtain shape derivative formula for the first eigenvalue. 


\section{Preliminaries}
	\label{Preliminaries section}
In this section, we mostly follow the presentation in \cite{bandle2015second}. Let $\Omega \subset \mathbb{R}^n$ be a bounded domain in the H$\ddot{\text{o}}$lder class $C^{2,\alpha}.$ Let $P$ be a point on $\partial \Omega$ and $T_P$ denote the tangent space of $P.$ Then there is a neighborhood $\Omega_P$ of $P$ and a Cartesian coordinate system having an orthonormal basis $B_P:=\{e_i\}, i=1,\ldots,n, $ centered at $P,$ with $e_i \in T_P, i=1,\ldots,n-1, $ and $e_n$ in the direction of unit outer normal $\eta$. Let $(\xi_1, \xi_2,\ldots,\xi_n)$ denote the coordinates with respect to $B_P.$ Also, we make the assumption $ \Omega \cap \Omega_P =\{\xi \in \Omega_P : \xi_n <F(\xi_1, \xi_2,\ldots,\xi_{n-1}) \}, F \in C^{2,\alpha}. $ From this, it is clear that $F_{\xi_i}(0)=0$ for $i=1,2,\ldots,n-1$. Let $\xi ' := (\xi_1, \xi_2,\ldots,\xi_{n-1}).$ In the $\xi ' -$ coordinate system,  $x(\xi ') = (\xi_1, \xi_2,\ldots,\xi_{n-1}, F(\xi '))$ represents points on $\Omega_P \cap  \partial \Omega $ and $\tilde \eta  (\xi ')= (\tilde \eta_1,\tilde \eta_2,\ldots,\tilde \eta_n) $ the unit outer normal. We will follow the Einstein convention, where repeated indices indicate summation from $1$ to $n-1$ or from $1$ to $n$. For $i=1,2,\ldots,n-1,$ the vectors $x_{\xi_i}:=\frac{\partial x}{\partial \xi_i}$ span $T_P.$ Let $g_{ij}:=x_{\xi_i} \cdot x_{\xi_j}$ be the metric tensor of $ \partial \Omega,$ and its inverse be denoted by $g^{ij}.$

Note that any vector $v$ can be written as $v=v^\tau +(v \cdot \eta)\eta,$ where $v^\tau := g^{ij} (v \cdot x_{\xi_i}) x_{\xi_j} $ is the projection to the tangent space. Let $v:\partial \Omega \to \mathbb{R}^n$ be a smooth vector field. The tangential divergence of $v$ is defined as 
\begin{equation} 
\label{Equation to be used in sigma A}
\text{div}_{\partial \Omega} v=g^{ij} \tilde{v}_{\xi_i} \cdot x_{\xi_j}, 
\end{equation} where $ \tilde{v}=v(\xi',F(\xi')).$ For any $f \in C^1(\Omega_P),$ the tangential gradient of $f|_{\partial\Omega}$ is given by
\begin{equation}
    \label{grad tau f on boundary}
    \nabla ^\tau f =g^{ij} (\nabla f \cdot x_{\xi_i}) x_{\xi_j}.
\end{equation}

For $f\in C^1(\partial \Omega),$ $v\in C^{0,1}(\partial \Omega , \mathbb{R}^n ),$ and mean curvature $H$ of the boundary $\partial \Omega,$ the Gauss theorem on surfaces can be stated as 
\begin{equation}
\label{Gauss theorem on surfaces}
    \int_{\partial \Omega}f \text{ div}_{\;\partial \Omega} \;v~ds = - \int_{\partial \Omega}v \cdot \nabla ^\tau f ~ds + (n-1) \int_{\partial \Omega}f (v\cdot \eta) H ~ds.
\end{equation}

\subsection{Hadamard's method of variations}
In this technique, we consider a family of diffeomorphisms
$T_t\in \mathcal{C}^1(\Omega, \mathbb{R}^n)$,  $|t|\in \mathbb{R}$ sufficiently small. We define
\begin{equation*}\label{T_t}
    T_t(x)=x+tv(x)+o(t),
\end{equation*}where  $v=(v_1(x),v_2(x),..., v_n(x)) $ is a $C^1(\overline{\Omega})$ vector field and $o(t)$ denotes all the terms with $\frac{o(t)}{t}\to 0$ as $t\to 0.$
Now letting {\begin{equation}\label{Omega_t}
    \Omega_t=T_t(\Omega),
\end{equation}the perturbation of energy functionals can be studied by analyzing the dependence of the functionals on $\Omega_t$ with respect to $t$. 

\subsection{Perturbation of domain}
\label{Perturbation of domain section}
For a $C^{2,\alpha}$ vector field $v$, let $\Omega_t$ be a family of perturbed as defined in \eqref{Omega_t} . The volume element of $\Omega_t$ and surface element of $\partial \Omega_t$ have been computed in \cite{bandle2015second}. We include the same here for the sake of completeness. 

\noindent Let $y(t,\Omega):=y.$ The Jacobian matrix (up to first order terms) of $y$ is given by $I +t D_v,$ where $(D_v)_{ij}=\frac{\partial v_i}{\partial x_j}.$ When $|t|$ is small, using Jacobi's formula, we get 
\begin{equation}
\label{J(t)}
J(t):=\text{det} (I+tD_v) =1+t \text{ div } v+ o(t).
\end{equation}
Now $|\Omega_t|=\int_\Omega J(t) ~dx = |\Omega|+ t \int_\Omega \text{ div } v ~dx +  o(t).$

 In order to compute the surface element of the boundary, $\partial \Omega_t $ can be represented locally as $ \{ y(\xi ') := x(\xi ')+t \tilde v(\xi ') : \xi ' \in \Omega_P \cap \{\xi_n=0\} \}.$ 
Let $a_{ij}:=x_{\xi_i} \cdot \tilde v(\xi_j) + x_{\xi_j} \cdot \tilde v(\xi_i), b_{ij}:=2\tilde v(\xi_i) \cdot \tilde v(\xi_j)$ and $g_{ij}^{t}:=g_{ij}+ta_{ij}+\frac{t^2}{2}b_{ij}.$
Then $|dy|^2=\left( g_{ij}+ta_{ij}+\frac{t^2}{2}b_{ij} \right) d\xi_i d\xi_j=g_{ij}^{t}d\xi_i d\xi_j.$
Now let $G:=(g_{ij}), G^{-1}:=(g^{ij}), A:=(a_{ij}), B:=(b_{ij}) $ and $ G^t:=(g_{ij}^{t}).$ The surface element on $\partial \Omega_t$ is given by $\sqrt{\text{det } G^t}d\xi '.$
Then $\sqrt{\text{det } G^t}=\left(\sqrt{\text{det } G}\right) k(x,t)^{1/2}, $ where $k(x,t):= \text{det }\left( I+tG^{-1}A+\frac{t^2}{2}G^{-1}B \right) .$ Let $\sigma_A:= \text{trace }G^{-1}A,  \sigma_B:= \text{trace }G^{-1}B$ and $ \sigma_{A^2}:= \text{trace }(G^{-1}A)^2.$
Using the Taylor expansion, for small $|t|,$ we have
$k(x,t)=1+t\sigma_A + o(t). $

Let $m(t):=\sqrt{k(x,t)}=1+\frac{t}{2}\sigma_A +o(t).$ Then the surface element $ds_t$ of $\partial \Omega_t$ can be written as $ds_t=m(t) ds,$ where $ds$ is the surface element of $\partial \Omega.$

 Using \eqref{Equation to be used in sigma A}, we have $\sigma_A=2 g^{ij} \tilde v_{\xi_j} \cdot x_{\xi_i} = 2 \text{div}_{\partial\Omega} v.$ It can be shown that $\frac{1}{2}\sigma_A= \text{div}_{\partial\Omega} v^\tau +(n-1)H(v \cdot \eta),$ where $H$ denotes the mean curvature of the boundary. So we have \begin{equation}
\label{m dot 0}
\dot m (0)=\frac{1}{2}\sigma_A= \text{div}_{\partial\Omega} v^\tau +(n-1)H(v \cdot \eta).
\end{equation}

We also include the following Lemma from \cite{mallick2022shape} for the sake of completeness.
\begin{lemma} \cite{mallick2022shape}
	\label{Lemma for Aij}
	Let  $y=x+tv(x)+o(t)$  
 and $A_{ij}(t):=\frac{\partial x_i}{\partial y_k}\frac{\partial x_j}{\partial y_k} \left(J(t)\right)^\frac{2}{p},$ with Jacobian $J(t).$ We have \begin{enumerate}
	    \item[(i)] $J(0)=1$ and $\dot{J}(0)=div\; v,$
	    \item[(ii)] $A_{ij}(0)= \delta_{ij}$ and $\dot{A_{ij}}(0)= \frac{2}{p}\delta_{ij}\;div\; v-\partial_jv_i-\partial_iv_j,$ where $\partial_jv_i:=\frac{\partial v_i}{\partial x_j}.$
	\end{enumerate}
\end{lemma}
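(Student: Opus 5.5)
The plan is a direct first-order expansion in $t$ of each ingredient, reading off the constant and linear coefficients. The one assumption we need is that the remainder in $y = x + tv(x) + o(t)$ is $o(t)$ in the $C^1(\overline{\Omega})$ sense, i.e. its spatial derivatives are also $o(t)$. This is implicit in calling $T_t$ a $C^1$ family of diffeomorphisms. Under it, the Jacobian matrix of $y$ with respect to $x$ is
\[
\frac{\partial y}{\partial x} = I + tD_v + o(t), \qquad (D_v)_{ij} = \partial_j v_i .
\]

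For part (i) we use \eqref{J(t)}, which follows from Jacobi's formula (equivalently, expanding the determinant, where only the diagonal product contributes at first order): $J(t) = \det(I + tD_v + o(t)) = 1 + t\,\operatorname{tr} D_v + o(t) = 1 + t\,\mathrm{div}\, v + o(t)$. Hence $J(0) = 1$ and $\dot J(0) = \mathrm{div}\, v$.

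For part (ii) we first invert the Jacobian matrix. For $|t|$ small, $I + tD_v + o(t)$ is invertible, and the Neumann series gives
\[
\frac{\partial x}{\partial y} = \Bigl(\frac{\partial y}{\partial x}\Bigr)^{-1} = I - tD_v + o(t),
\qquad\text{so}\qquad
\frac{\partial x_i}{\partial y_k} = \delta_{ik} - t\,\partial_k v_i + o(t).
\]
Multiplying and summing over $k$ gives
\[
\frac{\partial x_i}{\partial y_k}\frac{\partial x_j}{\partial y_k}
= \delta_{ij} - t\,(\partial_j v_i + \partial_i v_j) + o(t).
\]
Next, from part (i) and $J(t) > 0$ near $t=0$, the chain rule applied to $s \mapsto s^{2/p}$ at $s = 1$ gives $J(t)^{2/p} = 1 + \tfrac{2}{p}\, t\,\mathrm{div}\, v + o(t)$. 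Multiplying the two expansions and discarding the $O(t^2)$ cross terms,
\[
A_{ij}(t) = \delta_{ij} + t\Bigl(\tfrac{2}{p}\,\delta_{ij}\,\mathrm{div}\, v - \partial_j v_i - \partial_i v_j\Bigr) + o(t),
\]
from which $A_{ij}(0) = \delta_{ij}$ and the stated formula for $\dot A_{ij}(0)$ follow.

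There is no real obstacle; the only delicate point is justifying the matrix inversion up to $o(t)$, which needs the remainder to be uniform in $x$. Concretely, we write $(I+E)^{-1} = I - E + E^2(I+E)^{-1}$ with $E = tD_v + o(t)$, and note that $\|E\| = O(t)$ uniformly on $\overline{\Omega}$, because $v \in C^1(\overline{\Omega})$ on a compact set and the remainder's derivatives are $o(t)$ uniformly. The last term is then $O(t^2)$, so the expansion and the resulting derivatives hold pointwise, uniformly in $x$.
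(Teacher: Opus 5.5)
Your proof is correct and follows the paper's approach: the paper obtains (i) exactly as you do, by Jacobi's formula in \eqref{J(t)}, and for (ii) it gives no proof of its own but defers to \cite{mallick2022shape}, for which your expansion $\partial x/\partial y = I - tD_v + o(t)$ multiplied by $J(t)^{2/p} = 1 + \frac{2}{p}t\,\mathrm{div}\,v + o(t)$ is the standard computation. Your requirement that the $o(t)$ remainder hold in the $C^1$ sense, uniformly in $x$, makes explicit an assumption the paper uses tacitly when it writes the Jacobian matrix as $I + tD_v$ to first order.
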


\section{Properties of the first eigenvalue}
\begin{proof}
    (i) and (ii) follow from the characterization of $\lambda_1(V).$

    Consider $0<C<\frac{\lambda_1(V)}{\lambda_1(V)+\|V\|_{\infty}},$ that is, $C\|V\|_{\infty}\leq (1-C)\lambda_1(V).$ Then for all $u\in W^{1,p}(\Omega),$ using the definition of $\lambda_1(V)$, we get 
\begin{align*}
    (1-C)\int_{\Omega}(|\nabla u|^p+V|u|^p)~dx+(1-C)\beta\int_{\partial\Omega}|u|^p~ds&\geq(1-C)\lambda_1(V)\int_{\Omega}|u|^p~dx\\
    &\geq C\|V\|_{\infty}\int_{\Omega}|u|^p~dx\\
    &\geq-C\int_{\Omega}V|u|^p~dx.
\end{align*}
Since $\beta>0$, we have 
$$(1-C)\int_{\Omega}\left(|\nabla u|^p+V|u|^p\right)~dx+\beta\int_{\partial\Omega}|u|^p~ds\geq -C\int_{\Omega}V|u|^p~dx.$$
The above equation can be rearranged as  
\begin{equation}\label{potential-Bounded below by C}
C\int_{\Omega}|\nabla u|^p~dx\leq \int_{\Omega}|\nabla u|^p~dx+\int_{\Omega}V|u|^p~dx+\int_{\partial\Omega} \beta |u|^p ~ds,\quad \forall u\in{W^{1,p}(\Omega)}.
\end{equation}

\noindent The following inequality can be obtained using the variational characterization of $\lambda_1(V).$ \begin{equation}
    \label{potential-Bounded below by sigma_1}
     \lambda_1(V) \int_{\Omega} |u|^{p} ~dx \leq \int_{\Omega} |\nabla u |^{p} ~dx + \int_{\Omega} V |u|^{p} ~dx + \int_{\partial \Omega} \beta |u|^{p} ~ds,  \quad \forall u\in{W^{1,p}(\Omega)}.
\end{equation}
Let $M:= \frac{1}{2}\displaystyle{\inf} \{ C, \lambda_1(V) \}$.
Adding \eqref{potential-Bounded below by C} and \eqref{potential-Bounded below by sigma_1}, we arrive at 
\begin{equation*}
    M  \|u \|^p_{W^{1,p}(\Omega)} \leq \int_{\Omega}|\nabla u|^p ~dx+\int_{\Omega}V|u|^p ~dx+
    \int_{\partial\Omega}\beta|u|^p ~ds,  \quad \forall u\in W^{1,p}(\Omega).
\end{equation*}

\end{proof}


\section{Monotonicity of $\lambda_1(V)$ with respect to the Robin boundary parameter}

Let $u_\beta$ be the first eigenfunction corresponding to $ \lambda_1(V)$ with the normalization $\|u_\beta\|_{L^\infty} = 1$ for any $\beta>0.$
\begin{equation*}
   \lambda_1(\beta)= \displaystyle{\inf_{ u \in W^{1,p}(\Omega) }} \frac{\int_{\Omega} |\nabla u |^{p} ~dx + \int_{\Omega} V(x)|u|^{p} ~dx + \int_{\partial \Omega} \beta |u|^{p} ~ds }{ \int_{\Omega} |u|^{p} ~dx} 
\end{equation*}
Let $\beta_1 > \beta.$ Consider
\begin{equation*}
\begin{aligned}
    \lambda_1(\beta_1)- \lambda_1(\beta) &=  \lambda_1(\beta_1)- \displaystyle{\inf_{ u \in W^{1,p}(\Omega) }} \frac{\int_{\Omega} |\nabla u |^{p} ~dx + \int_{\Omega} V(x)|u|^{p} ~dx + \int_{\partial \Omega} \beta |u|^{p} ~ds }{ \int_{\Omega} |u|^{p} ~dx} \\
    &\geq \lambda_1(\beta_1)-  \frac{\int_{\Omega} |\nabla u_{\beta_1} |^{p} ~dx + \int_{\Omega} V(x)|u_{\beta_1}|^{p} ~dx + \int_{\partial \Omega} \beta |u_{\beta_1}|^{p} ~ds }{ \int_{\Omega} |u_{\beta_1}|^{p} ~dx} \\
    &= (\beta_1 - \beta ) \frac{\int_{\partial \Omega} |u_{\beta_1}|^{p} ~ds }{ \int_{\Omega} |u_{\beta_1}|^{p} ~dx}.
\end{aligned}
\end{equation*}
Now 
\begin{equation*}
\begin{aligned}
    \lambda_1(\beta_1)- \lambda_1(\beta) &= \displaystyle{\inf_{ u \in W^{1,p}(\Omega) }} \frac{\int_{\Omega} |\nabla u |^{p} ~dx + \int_{\Omega} V(x)|u|^{p} ~dx + \int_{\partial \Omega} \beta_1 |u|^{p} ~ds }{ \int_{\Omega} |u|^{p} ~dx} -  \lambda_1(\beta)\\
    &\leq \frac{\int_{\Omega} |\nabla u_{\beta} |^{p} ~dx + \int_{\Omega} V(x)|u_{\beta}|^{p} ~dx + \int_{\partial \Omega} \beta_1 |u_{\beta}|^{p} ~ds }{ \int_{\Omega} |u_{\beta}|^{p} ~dx} -  \lambda_1(\beta)\\
    &= (\beta_1 - \beta ) \frac{\int_{\partial \Omega} |u_{\beta}|^{p} ~ds }{ \int_{\Omega} |u_{\beta}|^{p} ~dx}.
\end{aligned}
\end{equation*}
So 
\begin{equation*}
    \frac{\int_{\partial \Omega} |u_{\beta_1}|^{p} ~ds }{ \int_{\Omega} |u_{\beta_1}|^{p} ~dx} \leq \frac{ \lambda_1(\beta_1)- \lambda_1(\beta) }{\beta_1-\beta} \leq \frac{\int_{\partial \Omega} |u_{\beta}|^{p} ~ds }{ \int_{\Omega} |u_{\beta}|^{p} ~dx}.
\end{equation*}
 From the characterization of the first eigenvalue, it follows that the first Robin eigenvalue $\lambda_1^\beta(V) \leq \lambda_1^D(V),$ the first Dirichlet eigenvalue. As $V \in C^1(\mathbb{R}^n),$ it is bounded in $\overline{\Omega}.$ Also, since  $\|u_\beta\|_{L^\infty} = 1$ for any $\beta>0$, the term $\int_{\Omega} V(x)|u_\beta|^{p} ~dx$ in the characterization of $\lambda_1^\beta(V)$ can be shown to be uniformly bounded below. As all other terms in the characterization of $\lambda_1^\beta(V)$ are nonnegative, it follows that $\lambda_1^\beta(V)$ is uniformly bounded below.  Thus, $(\lambda_1-V) |u_\beta|^{p-2}u_\beta$
	 is uniformly bounded in $L^\infty(\Omega)$.   Therefore, using 
	 \cite[Theorem 2]{lieberman1988boundary} we have $\{\|u_\beta\|_{C^{1,\alpha}(\overline{\Omega})}\}$ is bounded and hence, by the Arzela-Ascoli theorem,  $u_{\beta_1} \to u_{\beta}$ (upto a subsequence) in $C^1(\overline{\Omega})$ as $\beta_1 \to \beta .$ 
 
Therefore, we have
\begin{equation*}
    \frac{d\lambda_1}{d\beta} = \displaystyle{\lim_{\beta_1 \to \beta}} \frac{ \lambda_1(\beta_1)- \lambda_1(\beta) }{\beta_1-\beta} = \frac{\int_{\partial \Omega} |u_{\beta}|^{p} ~ds }{ \int_{\Omega} |u_{\beta}|^{p} ~dx}.
\end{equation*}

\begin{remark}
\begin{itemize}
    \item [(i)]Note that $\frac{d\lambda_1}{d\beta} >0.$ So the first eigenvalues are strictly increasing with respect to $\beta.$ Also, the first Robin eigenvalues $\lambda_1^\beta(V)$ are bounded by the  first Dirichlet eigenvalue $\lambda_1^D(V).$
     \item [(ii)] In Proposition 5.1 of \cite{mallick2022shape}, we proved that for $V \equiv 0, $ $\lambda_1^\beta \to \lambda_1^D $ and the corresponding eigefunctions $u_\beta \to u_D$ as $\beta \to \infty.$ We can generalize that for $\lambda_1(V)$ for $V \in C^1(\mathbb{R}^n),$ and the proof follows by a gentle modification of the proof of Proposition 5.1 of \cite{mallick2022shape}.
\end{itemize}
\end{remark}
\begin{proposition}
    Let $\Omega$ be a $C^{2,\alpha}$ bounded domain in $\mathbb{R}^n$ and let $V\equiv0.$ For $t \geq 1, $ and $\beta$ positive, $\lambda_1$ satisfies the domain monotonicity property $\lambda_1 (t \Omega,\beta) \leq \lambda_1 (\Omega,\beta).$
\end{proposition}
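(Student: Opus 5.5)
Since $V\equiv 0$, I will use the scaling behaviour of the Rayleigh quotient together with the monotonicity in $\beta$ from the preceding section. Given $u\in W^{1,p}(\Omega)$, set $w(y):=u(y/t)$ on $t\Omega$. The change of variables $y=tx$ gives
\begin{equation*}
\int_{t\Omega}|\nabla w|^p\,dy = t^{n-p}\int_\Omega |\nabla u|^p\,dx,\qquad \int_{t\Omega}|w|^p\,dy=t^n\int_\Omega|u|^p\,dx,
\end{equation*}
and, since surface measure scales by $t^{n-1}$,
\begin{equation*}
\int_{\partial(t\Omega)}|w|^p\,ds=t^{n-1}\int_{\partial\Omega}|u|^p\,ds .
\end{equation*}
The Rayleigh quotient of $w$ on $t\Omega$ with parameter $\beta$ is therefore
\begin{equation*}
t^{-p}\,\frac{\int_\Omega|\nabla u|^p\,dx+ t^{p-1}\beta\int_{\partial\Omega}|u|^p\,ds}{\int_\Omega|u|^p\,dx}.
\end{equation*}
The map $u\mapsto w$ is a bijection of $W^{1,p}(\Omega)$ onto $W^{1,p}(t\Omega)$, so taking infima yields the identity
\begin{equation*}
\lambda_1(t\Omega,\beta)=t^{-p}\,\lambda_1(\Omega,t^{p-1}\beta).
\end{equation*}

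It then suffices to show $t^{-p}\lambda_1(\Omega,t^{p-1}\beta)\le \lambda_1(\Omega,\beta)$ for $t\ge1$. Define $f(t):=t^{-p}\lambda_1(\Omega,t^{p-1}\beta)$, so $f(1)=\lambda_1(\Omega,\beta)$. I will prove $f'(t)\le 0$ for $t\ge 1$ by the chain rule, using the derivative formula $\frac{d\lambda_1}{d\beta}=\frac{\int_{\partial\Omega}|u_\beta|^p\,ds}{\int_\Omega|u_\beta|^p\,dx}$ established above. Writing $\gamma=t^{p-1}\beta$ and $u=u_\gamma$, this gives
\begin{equation*}
f'(t)= -p\,t^{-p-1}\lambda_1(\Omega,\gamma)+t^{-p}(p-1)t^{p-2}\beta\,\frac{\int_{\partial\Omega}|u|^p\,ds}{\int_\Omega|u|^p\,dx}.
\end{equation*}
Multiplying by $t^{p+1}\int_\Omega|u|^p\,dx$ and using $\lambda_1(\Omega,\gamma)\int_\Omega |u|^p\,dx=\int_\Omega|\nabla u|^p\,dx+\gamma\int_{\partial\Omega}|u|^p\,ds$, the sign of $f'(t)$ is the sign of
\begin{equation*}
-p\int_\Omega|\nabla u|^p\,dx - p\gamma\int_{\partial\Omega}|u|^p\,ds+(p-1)\gamma\int_{\partial\Omega}|u|^p\,ds
= -p\int_\Omega|\nabla u|^p\,dx-\gamma\int_{\partial\Omega}|u|^p\,ds<0 .
\end{equation*}
Hence $f$ is decreasing on $[1,\infty)$, so $f(t)\le f(1)$, which is the claim.

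The step needing the most care is the differentiability of $\beta\mapsto\lambda_1(\Omega,\beta)$ at every $\gamma>0$, which the chain rule requires. The previous section gives it, but its passage to the limit used convergence of $u_{\beta_1}$ only along a subsequence. Simplicity of $\lambda_1$ together with the normalization $\|u_\beta\|_\infty=1$ and positivity of $u_\beta$ should make the limit unique, so the whole family converges; I will note this explicitly. As a backup, if differentiability is in doubt, the two-sided difference-quotient inequality from that section already makes $\lambda_1$ locally Lipschitz in $\beta$, hence absolutely continuous. The same computation then holds almost everywhere and integrates to give $f(t)\le f(1)$.
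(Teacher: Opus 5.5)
Your proof is correct and is essentially the paper's argument. Both use the scaling identity $\lambda_1(t\Omega,\beta)=t^{-p}\lambda_1(\Omega,t^{p-1}\beta)$ together with the formula for $d\lambda_1/d\beta$. The only difference is the auxiliary function: the paper shows $\beta\mapsto\lambda_1(\Omega,\beta)/\beta$ is decreasing, which gives the slightly sharper bound $\lambda_1(t\Omega,\beta)\le t^{-1}\lambda_1(\Omega,\beta)$. Your computation yields this as well, since it gives $(tf)'(t)=-(p-1)t^{-p}\int_\Omega|\nabla u_\gamma|^p\,dx/\int_\Omega|u_\gamma|^p\,dx\le 0$.

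Your caveat about the paper's subsequence convergence is legitimate, and the fix via simplicity and the normalization $\|u_\beta\|_\infty=1$ is right. The fallback also works: $\lambda_1(\Omega,\cdot)$ is concave in $\beta$, and at any point of differentiability the two-sided difference-quotient bounds force the derivative to equal $\int_{\partial\Omega}|u_\beta|^p\,ds/\int_\Omega|u_\beta|^p\,dx$.
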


\begin{proof}
    After rescaling the Rayleigh quotient, we get 
\begin{equation*}\label{Rescaling Rayleigh quotient-1}
     \lambda_{1}\left(t \Omega, \beta /|t|^{p-1}\right)=\frac{1}{|t|^{p}}  \lambda_{1}(\Omega, \beta).
\end{equation*}

\noindent Therefore, we have \begin{equation}\label{Rescaling Rayleigh quotient-2}
     \lambda_{1}\left(t \Omega, \beta \right)=\frac{1}{|t|^{p}}  \lambda_{1}(\Omega, |t|^{p-1}\beta).
\end{equation}

\noindent Define $f(\beta):=\frac{\lambda_{1}(\Omega, \beta)}{\beta}.$ We will show that $f$ is decreasing.

$$
\begin{aligned}
f^{\prime}(\beta) & =\frac{\lambda_{1}^{\prime}(\Omega, \beta) \beta-\lambda_{1}(\Omega, \beta)}{\beta^{2}} \\
& =\frac{1}{\beta^{2}}\left \{\frac{\int_{\partial \Omega}\left|u_{\beta}\right|^{p} ds}{\int_{\Omega}\left|u_{\beta}\right|^{p} d x}\beta -\frac{\int_{\Omega}\left|u_{\beta}\right|^{p} d x+\beta \int_{\partial \Omega}\left|u_{\beta}\right|^{p} ds}{\int_{\Omega}\left|u_{\beta}\right|^{p} d x} \right \}\\
& <0.
\end{aligned}
$$

\noindent So $$\frac{\lambda_{1}(\Omega, \beta)}{\beta} \geq \frac{\lambda_{1}\left(\Omega,|t|^{p-1} \beta\right)}{|t|^{p-1} \beta}.$$
This implies $
|t|^{p-1} \lambda_{1}(\Omega, \beta) \geq \lambda_{1}\left(\Omega,|t|^{p-1} \beta\right).
$ Multiplying both sides by $\frac{1}{|t|^{p}},$ we get 
\begin{equation}\label{Inequality using a decreasing function f}
\frac{1}{|t|} \lambda_{1}(\Omega, \beta) \geq \frac{1}{|t|^{p}} \lambda_{1}\left(\Omega,|t|^{p-1} \beta\right). 
\end{equation}

\noindent Using \eqref{Inequality using a decreasing function f} in \eqref{Rescaling Rayleigh quotient-2}, 

$$
\lambda_{1}(t \Omega, \beta) \leq \frac{1}{|t|} \lambda_{1}(\Omega, \beta) \leq \lambda_{1}(\Omega, \beta) \text { as } t \geq 1.
$$

\end{proof}

\section{Shape derivative formula}
For a fixed potential $V \in C^1(\mathbb{R}^n),$  we obtain a shape derivative formula for the first eigenvalue as discussed below and use it to study the domain monotonicity property for the same.

Let us consider the energy functional 
\begin{equation}
\label{First energy functional}
E_{\Omega_t}(u):=\int_{\Omega_t} |\nabla_y u |^p dy +\int_{\Omega_t}V(y) |u|^p dy -\int_{\Omega_t}\lambda(\Omega_t) |u|^p dy + \beta \int_{\partial \Omega_t}  |u|^p ~ds_t.
\end{equation}

\noindent Assume $\tilde{u} \in W^{1,p}(\Omega_t)$ is a critical point of (\ref{First energy functional}).Therefore, it satisfies the following Euler-Lagrange equation 
	\begin{equation}
\label{Euler Lagrange equation in Omega_t}
\left\{
\begin{split}
-\Delta_p \tilde{u}+ V(y)|\tilde{u}|^{p-2}\tilde{u}&=\lambda(\Omega_t)  |\tilde{u}|^{p-2}\tilde{u} \quad \mathrm{in} ~\Omega_t ,\\
|\nabla \tilde{u}|^{p-2}\frac{\partial \tilde{u}}{\partial\eta_t}+\beta|\tilde{u}|^{p-2}\tilde{u}&
=0\quad\mathrm{on}~\partial\Omega_t,
\end{split}
\right.
\end{equation} where $\eta_t$ is the outer unit normal to the boundary $\partial \Omega_t.$

\noindent Define $E(t):=E_{\Omega_t}(\tilde{u})$ and $\lambda(t):=\lambda(\Omega_t).$ Let  $y=x+tv(x)+o(t),$ where $x(y)$ is its inverse. Then by a change of variables, we have
\begin{equation*}
\label{E in Omega}
\begin{aligned}
E(t)&=\int_{\Omega} \left[(\nabla \tilde{u}(t))^T A(t) \nabla \tilde{u}(t) \right]^\frac{p}{2} ~dx +\int_{\Omega}\tilde{V} |\tilde{u}(t)|^pJ(t)dx  -\int_{\Omega}\lambda(t) |\tilde{u}(t)|^pJ(t)~dx \\
&+ \beta \int_{\partial \Omega}  |\tilde{u}(t)|^p m(t)~ds,
\end{aligned}
\end{equation*}
where $A_{ij}(t):=\frac{\partial x_i}{\partial y_k}\frac{\partial x_j}{\partial y_k} \left(J(t)\right)^\frac{2}{p},$ with Jacobian $J(t)$ as given in \eqref{J(t)},  $\tilde{u}(t):=\tilde{u}(x+tv(x),t), $ for $t \in (-\epsilon, \epsilon)$ with $\epsilon > 0$ small enough, and $\tilde{V}:=V(x+tv+o(t)).$
	
\noindent Now, $\tilde{u}(t)$ solves the corresponding Euler-Lagrange equation in $\Omega$ as given below.  
 \begin{equation}
 \label{Euler Lagrange equation in Omega}
 \left\{
 \begin{split}
 -L_A\tilde{u}(t)+ \tilde{V}  |\tilde{u}(t)|^{p-2}\tilde{u}(t)J(t)&=\lambda(t)  |\tilde{u}(t)|^{p-2}\tilde{u}(t)J(t)\quad \mathrm{in} ~\Omega ,\\
 \partial_{\eta_A}\tilde{u}(t)+\beta|\tilde{u}(t)|^{p-2}\tilde{u}(t)m(t)&=0\quad\mathrm{on}~\partial\Omega,
 \end{split}
 \right.
 \end{equation}
 where $L_A(\tilde{u}(t)):=\frac{\partial}{\partial x_j}\{\left[(\nabla \tilde{u}(t))^T A \nabla \tilde{u}(t) \right]^\frac{p-2}{2}A_{ij} \tilde{u}_{x_i}(t) \}$ and \\ $\partial_ {\eta_A}(\tilde{u}(t)) :=\left[(\nabla \tilde{u}(t))^T A \nabla \tilde{u}(t) \right]^\frac{p-2}{2}A_{ij} \tilde{u}_{x_i}(t) \eta^j. $
 
 \noindent The equation \eqref{Euler Lagrange equation in Omega} can be written in the weak form as  
 \begin{equation}
\label{Weak form of Euler Lagrange equation in Omega}
\begin{aligned}
    \int_{\Omega} \left[(\nabla \tilde{u})^T A \nabla \tilde{u} \right]^\frac{p-2}{2} 
    &\left[(\nabla \tilde{u})^T A \nabla \phi \right]~dx  
    +  \int_{\Omega} \tilde{V}  |\tilde{u}|^{p-2}\tilde{u}J(t)~dx \\
    &+ \beta \int_{\partial \Omega}  |\tilde{u}|^{p-2} \tilde{u} m \phi ~ds 
    = \int_{\Omega}\lambda(t) |\tilde{u}|^{p-2}\tilde{u}J\phi ~dx.
\end{aligned}
\end{equation}
for all $\phi \in W^{1,p}(\Omega).$

 \textcolor{black}{It is possible to prove that $\tilde{u}(t) \to \tilde{u}(0)$ in $C^1(\overline{\Omega})$ using similar arguments as in Theorem 1 of \cite{melian2001perturbation}.
 Therefore, $\tilde{u}(t)$ can be expanded as $\tilde{u}(t)=\tilde{u}(0)+t\dot{\tilde{u}}(0)+o(t).$} Also $\tilde{u}(0):=u(x).$ $A_{ij}(t) $ can be written as $A_{ij}(t)=A_{ij}(0)+t\dot{A_{ij}}(0)+o(t).$ Hence Lemma \ref{Lemma for Aij} holds. 

	
Since the first eigenvalue $\lambda_1(t):=\lambda_1(\Omega_t)$ of (\ref{Euler Lagrange equation in Omega_t}) is given by the following Rayleigh quotient $$\lambda_1(t)=\frac{\int_{\Omega_t} |\nabla_y \tilde{u} |^p dy + \int_{\Omega_t}V(y) |\tilde{u}|^p dy+ \beta \int_{\partial \Omega_t}  |\tilde{u}|^p ~ds_t}  {\int_{\Omega_t}  |\tilde{u}|^p dy}, $$
we can employ a change of variables to get 
\begin{equation}
\label{Rayleigh quotient after change of variables}
\lambda_1(t)=\frac{\int_{\Omega}  \left[(\nabla \tilde{u}(t))^T A(t) \nabla \tilde{u}(t) \right]^\frac{p}{2} ~dx + \int_{\Omega} \tilde{V} |\tilde{u}|^p J(t) dx  + \beta \int_{\partial \Omega}  |\tilde{u}(t)|^p m(t) ~ds}  {\int_{\Omega}  |\tilde{u}(t)|^p J(t) ~dx}. 
\end{equation} 

\noindent Note that \eqref{Euler Lagrange equation in Omega_t} is $(p-1)$ homogeneous.   Therefore, we apply the normalization $\int_{\Omega}  |\tilde{u}(t)|^p J(t) ~dx=1$ to arrive at the following equation.   
\begin{equation}
\label{Derivative of normalization}
\frac{d}{dt}\left( \int_{\Omega}  |\tilde{u}(t)|^p J(t) ~dx \right) = p\int_{\Omega}  |\tilde{u}(t)|^{p-2} \tilde{u}(t) \dot{\tilde{u}}(t)J(t) ~dx +\int_{\Omega}  |\tilde{u}(t)|^p \dot{J}(t) ~dx=0.
\end{equation}

\noindent In \eqref{Weak form of Euler Lagrange equation in Omega}, we select $\dot{\tilde{u}} $ as a test function, and obtain
\begin{equation}
\label{After putting test function in weak form}
\begin{aligned}
\int_{\Omega} \left[(\nabla \tilde{u})^T A \nabla \tilde{u} \right]^\frac{p-2}{2} &\left[(\nabla \tilde{u})^T A \nabla \dot{\tilde{u}} \right]~dx  + \int_{\Omega} \tilde{V}  |\tilde{u}|^{p-2}\tilde{u}\dot{\tilde{u}} J(t) ~dx \\
&+ \beta \int_{\partial \Omega}  |\tilde{u}|^{p-2} \tilde{u} \dot{\tilde{u}} m  ~ds
=\int_{\Omega}\lambda_1(t) |\tilde{u}|^{p-2}\tilde{u} \dot{\tilde{u}} J ~dx.
\end{aligned}
\end{equation}

\noindent Now, by multiplying by $p$ on both sides of (\ref{After putting test function in weak form}) and using (\ref{Derivative of normalization}), we have
\begin{equation}
\label{For substituting in lambda dot}
\begin{aligned}
p\int_{\Omega} \left[(\nabla \tilde{u})^T A \nabla \tilde{u} \right]^\frac{p-2}{2} &\left[(\nabla \tilde{u})^T A \nabla \dot{\tilde{u}} \right]~dx + p\int_{\Omega} \tilde{V}  |\tilde{u}|^{p-2}\tilde{u}\dot{\tilde{u}} J(t) ~dx \\
&+ p\beta \int_{\partial \Omega}  |\tilde{u}|^{p-2} \tilde{u} \dot{\tilde{u}} m  ~ds =-\lambda_1(t)\int_{\Omega}  |\tilde{u}|^p \dot{J}~dx.
\end{aligned}
\end{equation}
Differentiating both sides of (\ref{Rayleigh quotient after change of variables}) with respect to $t$ and substituting from (\ref{For substituting in lambda dot}), we get
\begin{equation*}
\label{lambda dot t}
\begin{aligned}
\dot{\lambda}_1(t)&=\frac{p}{2}\int_{\Omega} \left[(\nabla \tilde{u})^T A \nabla \tilde{u} \right]^\frac{p-2}{2} \left[(\nabla \tilde{u})^T \dot{A} \nabla {\tilde{u}} \right]~dx -\lambda_1(t)\int_{\Omega}  |\tilde{u}|^p \dot{J}~dx+ \beta \int_{\partial \Omega}  |\tilde{u}|^p \dot{m} ~ds  \\
&+ \int_{\Omega}\dot{\tilde{V}} v |\tilde{u}|^p J ~dx + \int_{\Omega}\tilde{V} |\tilde{u}|^p \dot{J} ~dx.
\end{aligned}
\end{equation*} 

\noindent Thus 
\begin{equation*}
\label{lambda dot 0 starting equation}
\begin{aligned}
    \dot{\lambda}_1(0)&=\frac{p}{2}\int_{\Omega} \left[(\nabla {u})^T A(0) \nabla {u} \right]^\frac{p-2}{2} \left[(\nabla {u})^T \dot{A}(0) \nabla {u} \right]~dx -\lambda_1(0)\int_{\Omega}  |u|^p \dot{J}(0)~dx\\
    &+ \beta \int_{\partial \Omega}  |u|^p \dot{m}(0) ~ds 
     + \int_{\Omega}\dot{\tilde{V}} v |\tilde{u}|^p J(0) ~dx + \int_{\Omega}\tilde{V} |\tilde{u}|^p \dot{J}(0) ~dx .
\end{aligned}
\end{equation*} 
Define $\dot{\Lambda}_1:=\frac{p}{2}\int_{\Omega} \left[(\nabla {u})^T A(0) \nabla {u} \right]^\frac{p-2}{2} \left[(\nabla {u})^T \dot{A}(0) \nabla {u} \right]~dx, \; \dot{\Lambda}_2:=-\lambda_1(0)\int_{\Omega}  |u|^p \dot{J}(0)~dx,$ $\dot{\Lambda}_3:=\beta \int_{\partial \Omega}  |u|^p \dot{m}(0) ~ds,$ $\dot{\Lambda}_4:=\int_{\Omega}\dot{\tilde{V}} v |\tilde{u}|^p J(0) ~dx,$ and $\dot{\Lambda}_5:=\int_{\Omega}\tilde{V} |\tilde{u}|^p \dot{J}(0) ~dx.$

\noindent Then $$ \dot{\Lambda}_1= \frac{p}{2}\int_{\Omega} \left[u_{x_i} A_{ij}(0) u_{x_j} \right]^\frac{p-2}{2} \left[u_{x_i} \dot{A}_{ij}(0) u_{x_j} \right] ~dx. $$

\noindent Now, applying Lemma \ref{Lemma for Aij} and simplifying, we have
\begin{equation}
\label{la_1 dash}
   \dot{\Lambda}_1=\int_{\Omega} |\nabla {u}|^{p} \text{ div } v ~dx -p\int_{\Omega} |\nabla {u}|^{p-2}\partial_j v_i u_{x_i} u_{x_j} ~dx.
\end{equation}


Note that the eigenfunction $u$ has only $C^{1,\alpha}$ regularity. It would have been possible to obtain a simplified formulation for the above equation using integration by parts if $u$ had $C^2$ regularity. As an alternative, we introduce the following perturbed problem for small $\varepsilon>0$, 
adopting the methodology in \cite{melian2001perturbation} and \cite{del2009some}. 

\begin{equation}
	\label{Epsilon Robin p-Laplacian}
	\left\{
	\begin{split}
-\mbox{div} ((|\nabla u_\varepsilon|^2+\varepsilon ^2)^\frac{p-2}{2} \nabla u_\varepsilon)+V|u_\varepsilon|^{p-2}u_\varepsilon &= \lambda_\varepsilon  |u_\varepsilon|^{p-2}u_\varepsilon \quad \mathrm{in} ~\Omega,\\
	-(|\nabla u_\varepsilon|^2+\varepsilon ^2)^\frac{p-2}{2}\frac{\partial u_\varepsilon}{\partial\eta}&=\beta|u_\varepsilon|^{p-2}u_\varepsilon \quad\mathrm{on}~\partial\Omega.
	\end{split}
	\right.
	\end{equation} 
In this case, $u_\varepsilon$ belongs to   $C^{2,\theta}(\overline{\Omega})$ for some $\theta \in (0,1).$ Also, $u_\varepsilon \to u$ in $C^1(\overline{\Omega})$ as $\varepsilon \to 0$ (see \cite{MR0241822} and \cite{lieberman1988boundary}).

\noindent Let 
\begin{equation}
\label{Lambda_1 epsilon dot}
   \dot{\Lambda}_1^\varepsilon=\int_{\Omega} (|\nabla u_\varepsilon|^2+\varepsilon ^2)^\frac{p}{2} \text{ div } v ~dx -p\int_{\Omega}(|\nabla u_\varepsilon|^2+\varepsilon ^2)^\frac{p-2}{2}\partial_j v_i (u_\varepsilon)_{x_i} (u_\varepsilon)_{x_j} ~dx.
\end{equation}

\noindent Keep in mind that 
\begin{equation}
    \label{p-Laplace expansion for perturbed problem}
    \begin{aligned}
    \mbox{div} ((|\nabla u_\varepsilon|^2+\varepsilon ^2)^\frac{p-2}{2} \nabla u_\varepsilon)&=(|\nabla u_\varepsilon|^2+\varepsilon ^2)^\frac{p-2}{2} \Delta u_\varepsilon \\
    &+ (p-2) (|\nabla u_\varepsilon|^2+\varepsilon ^2)^\frac{p-4}{2} (u_\varepsilon)_{x_i} (u_\varepsilon)_{x_j x_i} (u_\varepsilon)_{x_j}.
    \end{aligned}
\end{equation}
\noindent Using the integration by parts in \eqref{Lambda_1 epsilon dot} and substituting from \eqref{p-Laplace expansion for perturbed problem}, we arrive at 
 
\begin{equation}
 \label{Lambda_1 epsilon dot new}
    \begin{aligned}
     \dot{\Lambda}_1^\varepsilon &= p\int_{\Omega} \mbox{div} ((|\nabla u_\varepsilon|^2+\varepsilon ^2)^\frac{p-2}{2} \nabla u_\varepsilon) (\nabla u_\varepsilon \cdot v)~dx \\
     &-p\int_{\partial \Omega}(|\nabla u_\varepsilon|^2+\varepsilon ^2)^\frac{p-2}{2}(\nabla u_\varepsilon \cdot v)(\nabla u_\varepsilon \cdot \eta) ~ds+\int_{\partial \Omega} (|\nabla u_\varepsilon|^2+\varepsilon ^2)^\frac{p}{2}(v \cdot \eta)~ds.
   \end{aligned}
 \end{equation}

\noindent Applying the integration by parts in $\dot{\Lambda}_2,$ we get $$\dot{\Lambda}_2=  \lambda_1 (0) \int_{\Omega} p |u|^{p-2} u (\nabla u \cdot v) ~dx - \lambda_1 (0)  \int_{\partial \Omega} |u|^p (v \cdot \eta) ~ds. $$
\noindent Consider  $$\dot{\Lambda}_2^\varepsilon=  \lambda_\varepsilon \int_{\Omega} p |u_\varepsilon|^{p-2} u_\varepsilon (\nabla u_\varepsilon \cdot v) ~dx - \lambda_1 (0)  \int_{\partial \Omega} |u|^p (v \cdot \eta) ~ds.$$
As $\lambda_\varepsilon |u_\varepsilon|^{p-2} u_\varepsilon = - \mbox{div} ((|\nabla u_\varepsilon|^2+\varepsilon ^2)^\frac{p-2}{2} \nabla u_\varepsilon) + V|u_\varepsilon|^{p-2}u_\varepsilon$ in $\Omega ,$  $\dot{\Lambda}_2^\varepsilon$ can be written as 
\begin{equation}
 \label{Lambda_2 epsilon dot}
 \begin{aligned}
    \dot{\Lambda}_2^\varepsilon &= - p\int_{\Omega} \mbox{div} ((|\nabla u_\varepsilon|^2+\varepsilon ^2)^\frac{p-2}{2} \nabla u_\varepsilon) (\nabla u_\varepsilon \cdot v)~dx\\
    &+ p \int_{\Omega} V|u_\varepsilon|^{p-2}u_\varepsilon  (\nabla u_\varepsilon \cdot v)~dx 
    -\lambda_1 (0)  \int_{\partial \Omega} |u|^p (v \cdot \eta) ~ds.
    \end{aligned}
\end{equation}

\noindent Now, using (\ref{m dot 0}) in $\dot{\Lambda}_3,$  we have 
\begin{equation}
  \label{Lambda_3 dot}
 \dot{\Lambda}_3= \beta \int_{\partial \Omega} |u|^p \{ \text{div}_{\partial\Omega} v^\tau +(n-1)H(v \cdot \eta) \} ~ds.  
\end{equation}

\noindent Applying Lemma \ref{Lemma for Aij} and using the integration by parts, we get 
\begin{equation}\label{Lambda_5 dot}
    \dot{\Lambda}_5 = -\int_{\Omega}\dot{V} v |u|^p ~dx - p \int_{\Omega} V|u|^{p-2}u (\nabla u \cdot v)~dx +\int_{\partial \Omega}V |u|^p (v \cdot \eta ) ~dx.
\end{equation}

\noindent It can be seen that $\dot{\Lambda}_1^\varepsilon$ and $\dot{\Lambda}_2^\varepsilon$ converges to $\dot{\Lambda}_1$ and $\dot{\Lambda}_2,$ respectively as $\varepsilon \to 0.$ Therefore,   
\begin{equation}
\label{First formula for lambda dot 0}
\begin{aligned}
 \dot{\lambda}_1(0)&:=\displaystyle{\lim_{\varepsilon \to 0}}\;\dot{\Lambda}_1^\varepsilon+\dot{\Lambda}_2^\varepsilon+\dot{\Lambda}_3+\dot{\Lambda}_4 + \dot{\Lambda}_5 =\dot{\Lambda}_1+\dot{\Lambda}_2+\dot{\Lambda}_3+\dot{\Lambda}_4 + \dot{\Lambda}_5\\
&=\int_{\partial \Omega} \{\left[|\nabla u|^p - \lambda_1 (0) |u|^p + \beta |u|^p (n-1)H + V |u|^p \right] (v \cdot \eta) \\
&\quad \quad -p |\nabla u|^{p-2}(\nabla u \cdot v)(\nabla u \cdot \eta) + \beta |u|^p  \text{div}_{\partial\Omega} v^\tau  \} ~ds. 
\end{aligned} 
\end{equation}
\noindent Substituting from the boundary condition of (\ref{Euler Lagrange equation in Omega_t}) in (\ref{First formula for lambda dot 0}),
\begin{equation}
\label{Third formula for lambda dot 0}
\begin{aligned}
 \dot{\lambda}_1(0)&=\int_{\partial \Omega} \{\left[|\nabla u|^p - \lambda_1 (0) |u|^p + \beta |u|^p (n-1)H +V |u|^p \right] (v \cdot \eta) \\
&\quad \quad +p \beta |u|^{p-2} u (\nabla u \cdot v) + \beta |u|^p  \text{div}_{\partial\Omega} v^\tau  \} ~ds. 
\end{aligned} 
\end{equation}

\noindent Note that 
$$ \begin{aligned}
|u|^{p-2} (\nabla u \cdot v )&= \left( \left( |u|^{p-2} \nabla u  \right)^\tau +  \left( |u|^{p-2} \nabla u \cdot \eta \right) \eta \right) \cdot \left( v^\tau + (v \cdot \eta) \eta \right) \\
&= \left( |u|^{p-2} \nabla u  \right)^\tau \cdot v^\tau + \left( |u|^{p-2} \nabla u \cdot \eta \right) (v \cdot \eta).
\end{aligned} $$
Using the above equation in \eqref{Third formula for lambda dot 0}, we get  
\begin{equation}
\label{Fourth formula for lambda dot 0}
\begin{aligned}
  \dot{\lambda}_1(0)&=\int_{\partial \Omega} \{\left[|\nabla u|^p - \lambda_1 (0) |u|^p + \beta |u|^p (n-1)H + p \beta u \left( |u|^{p-2} \nabla u \cdot \eta \right) +V |u|^p\right] (v \cdot \eta) \\
&\quad \quad +p \beta u \left( |u|^{p-2} \nabla u  \right)^\tau \cdot v^\tau + \beta |u|^p  \text{div}_{\partial\Omega} v^\tau  \} ~ds. 
\end{aligned} 
\end{equation}
By the Gauss theorem on surfaces (\ref{Gauss theorem on surfaces}), 
	 \begin{equation*}
	 \begin{aligned}
	 \int_{\partial \Omega}  \beta |u|^p  \text{div}_{\partial\Omega} v^\tau ~ds &=- \beta\int_{\partial \Omega}v^\tau \cdot \nabla ^\tau |u|^p ~ds + \beta(n-1) \int_{\partial \Omega}|u|^p (v^\tau\cdot\eta) H ~ds \\
	 &=- \beta\int_{\partial \Omega}v^\tau \cdot \nabla ^\tau |u|^p ~ds.
	 \end{aligned}
	 \end{equation*}
The term $\nabla ^\tau |u|^p$ can be reformulated using (\ref{grad tau f on boundary}) as
	\begin{equation*}
	 \nabla ^\tau |u|^p =g^{ij} (\nabla |u|^p \cdot x_{\xi_i}) x_{\xi_j} =g^{ij}(p|u|^{p-2}u\nabla u \cdot x_{\xi_i}) x_{\xi_j} =pu(|u|^{p-2}\nabla u)^\tau.
	 \end{equation*}
	 Thus \begin{equation}
	 \label{For cancelling the extra terms in lambda dot 0}
	    \int_{\partial \Omega}  \beta |u|^p  \text{div}_{\partial\Omega} v^\tau ~ds = - \int_{\partial \Omega}p \beta u v^\tau \cdot  (|u|^{p-2}\nabla u)^\tau ~ds.
	 \end{equation}
The equation  (\ref{Fourth formula for lambda dot 0}) can be simplified using (\ref{For cancelling the extra terms in lambda dot 0}) to arrive at the following expression.
	 \begin{equation*}
	 \label{Final formula for lambda dot 0 -1}
	      \dot{\lambda}_1(0)=\int_{\partial \Omega} \left[|\nabla u|^p - \lambda_1 (0) |u|^p + \beta |u|^p (n-1)H + p \beta u \left( |u|^{p-2} \nabla u \cdot \eta \right) +V |u|^p \right] (v \cdot \eta)   ~ds.
	 \end{equation*}
The above expression can be rephrased as follows using the boundary condition of (\ref{Euler Lagrange equation in Omega_t}).
\begin{equation*}
\label{Final formula for lambda dot 0 -2}
  \dot{\lambda}_1(0)=\int_{\partial \Omega}   \left[|\nabla u|^p - \lambda_1 (0) |u|^p + \beta |u|^p (n-1)H - p \beta^2 \frac{|u|^{2p-2}}{|\nabla u|^{p-2}} +V |u|^p \right] (v \cdot \eta) ~ds. 
\end{equation*}
Hence 
\begin{equation}
        \dot{\lambda}_1(0)  =\int_{\partial \Omega} \left[|\nabla u|^p - \lambda_1 (0) |u|^p + \beta |u|^p (n-1)H + p \beta u \left( |u|^{p-2} \nabla u \cdot \eta \right) +V |u|^p \right] (v.\eta)   ds \\
\end{equation}

The above discussion is summarized in Theorem \ref{Shape derivative formula-Schrodinger}.

\begin{remark} The following domain monotonicity results hold for $\lambda_1(V).$ 
\begin{itemize}
    \item [(i)] Let $\Omega  = B(0,R)$ in $\mathbb{R}^n.$ Assume $V$ is radial and $V(x)\leq \lambda_1 \; a.e. $ on $\partial \Omega.$
        \begin{enumerate}
             \item [(a)] If $\beta \geq \left(\frac{n-1}{R(p-1)}\right)^{p-1},$ then $\dot{\lambda}_1(0)<0 \; (>0)$ for a smooth perturbation $v$ of the domain  with $\int_{\partial\Omega}v \cdot \eta ~ds >0 \; (<0)$ on $\partial \Omega$.

             \item [(b)] Let $R \geq 1,\beta \geq 1$ and $p \geq n.$ Then $\dot{\lambda}_1(0)\leq 0 \; (\geq 0)$ for a smooth $v$  perturbation of the domain with $\int_{\partial\Omega}v \cdot \eta ~ds >0 \; (<0)$ on $\partial \Omega$. If we assume a strict inequality in any one of the inequalities for $R, \beta, $ or $p,$ then we get a strict inequality for $\dot{\lambda}_1(0).$
         \end{enumerate} 

         \item [(ii)] Let $\Omega \subseteq \mathbb{R}^n$ be a bounded domain in the H$\ddot{\text{o}}$lder class $C^{2,\alpha},$ $\Omega_t$ be a family of perturbations with $ \Omega_t :=\{y=x+tv(x)+o(t):x\in \Omega, |t| \text{ sufficiently small}\}$ where $v=(v_1(x),v_2(x),..., v_n(x)) $ is a $C^{2,\alpha}$ vector field such that $v(x)\cdot \eta(x)>0$ for all $x\in \partial\Omega$. If $V(x)\leq \lambda_1 \; a.e. $ on $\partial \Omega,$ then there exists a constant $\beta^*(\Omega, v)>0$ such that  for $\beta>\beta^*$, $\dot{\lambda}_1(0)<0$.
    \end{itemize}
The proofs are similar to those of Theorem 1.3 and Theorem 1.4 of \cite{mallick2022shape}.
    
\end{remark}

\bibliography{References}
\bibliographystyle{amsplain}

\textsc{Ardra A}\\
Department of Mathematics,\\
Indian Institute of Technology Palakkad,\\
Kerala-678623, India \\
ardra.math@gmail.com
\end{document}